\newcommand{\m}{\mathfrak{m} }
\newcommand{\f}{\mathcal{F}}
\newcommand{\R}{\mathcal{R}}
\newcommand{\Z}{\mathbb{Z} }
\newcommand{\N}{\mathbb{N} }
\newcommand{\C}{\mathbb{C} }
\newcommand{\Ass}{\operatorname{Ass}}
\newcommand{\grade}{\operatorname{grade}}
\newcommand{\vol}{\operatorname{vol}}
\newcommand{\sym}{\operatorname{Sym}}
\theoremstyle{plain}
\newtheorem{theorem}{Theorem}[section]
\newtheorem{corollary}[theorem]{Corollary}
\theoremstyle{definition}
\newtheorem{remark}[theorem]{Remark}
\newtheorem{example}[theorem]{Example}
\newtheorem*{example*}{\it Example}
\newtheorem*{acknowledgements}{Acknowledgements}
\theoremstyle{remark}
\newtheorem*{claim*}{\it Claim}
\newtheorem*{case*}{\it Case}
\newtheorem*{note*}{\it Note}
\title[Algorithms for computing mixed multiplicities]{ALGORITHMS FOR COMPUTING MIXED MULTIPLICITIES, \\ [3mm] MIXED VOLUMES AND SECTIONAL MILNOR NUMBERS}
\thanks{{\it 2020 AMS Mathematics Subject Classification:} Primary 13-04, 13A30, 13H15.}
\thanks{{\it Key words}: Multi-Rees algebras, mixed multiplicities, sectional Milnor numbers, mixed volume}
\author{Kriti Goel}
\address{Department of Mathematics, University of Utah, Salt Lake City, UT 84112, USA}
\email{kritigoel.maths@gmail.com}
\author{Vivek Mukundan}
\address{Department of Mathematics, Indian Institute of Technology Delhi, New Delhi, 110016, India}
\email{vmukunda@iitd.ac.in}
\author{Sudeshna Roy}
\address{Department of Mathematics, Chennai Mathematical Institute, Siruseri, Kelambakkam 603103, India}
\email{sudeshnaroy.11@gmail.com}
\author{J. K. Verma}
\address{Department of Mathematics, Indian Institute of Technology Bombay, Mumbai, 400076, India}
\email{jkv@math.iitb.ac.in}
\begin{document}

\begin{abstract}
We present a package \texttt{MixedMultiplicity} for computing mixed multiplicities of ideals in a Noetherian ring which is either local or a standard graded algebra over a ﬁeld. This enables us to find mixed volumes of convex lattice  polytopes and sectional Milnor numbers of hypersurfaces with an isolated singularity. The algorithms make use of the defining equations of the multi-Rees algebra of ideals, which are obtained by generalising a result of Cox, Lin and Sosa \cite{CLS}.
\end{abstract}

\maketitle

\section{Introduction}

The objective of this article is to describe the Macaulay2 package \texttt{MixedMultiplicity} which computes the mixed multiplicities of ideals having positive grade in a Noetherian ring, mixed volume of a collection of convex lattice polytopes and sectional Milnor numbers of hypersurfaces with an isolated singularity. One of the main steps of the algorithms is the computation of the defining equations of the multi-Rees algebra 
\[\R(I_1,\ldots,I_s)=R[I_1t_1, \ldots, I_st_s]=\bigoplus\limits_{a_1, \ldots, a_s \geq 0} I_1^{a_1} \cdots I_s ^{a_s}t_1^{a_1} \cdots t_s^{a_s} \subseteq R[t_1, \ldots, t_s]\] 
of ideals $I_1,\ldots,I_s$ in a Noetherian ring $R$. When $R$ is a polynomial ring over any field $k$ and $I_1,\ldots, I_s$ are monomial ideals in $R$, D. Cox, K.-N. Lin, and G. Sosa give an explicit formula for the defining equations of $\R(I_1,\ldots,I_s)$ in \cite{CLS}. In this article, we obtain, in Theorem \ref{TisRees}, an analogue of their result for any set of ideals $I_1, \ldots, I_s$ in a Noetherian ring $R$ such that each $I_i$ has positive grade. The latter condition is always satisfied when $R$ is a domain or for any ideal of positive height in a reduced ring or in a Cohen-Macaulay ring. Using this result, we write a function \texttt{multiReesIdeal} to compute the defining ideal of a multi-Rees algebra in Macaulay2. It should be noted that the command \texttt{reesIdeal} in the Macaulay2 package \texttt{ReesAlgebra} \cite{Eisenbud} is already available to compute the defining ideal of the Rees algebra of a module \cite{Eisenbud-Huneke-Ulrich}, in particular, the defining ideal of a multi-Rees algebra. But, the algorithm presented in this article runs faster in many cases. When the ring is not a domain, the algorithm follows an analogue of the \texttt{reesIdeal}, albeit for the multiple ideal setting.


Several authors have proposed algorithms to determine the defining equations of the multi-Rees algebra for specific classes of ideals. For example,  J. Ribbe \cite[Proposition 3.1, Proposition 3.4]{Ribbe}, K.-N. Lin and C. Polini \cite[Theorem 2.4]{Lin-Polini}, G. Sosa \cite[Lemma 2.1]{Sosa},  and  B. Jabarnejad \cite[Theorem 1]{Jabarnejad}. 
The algorithm for computing defining equations of $\R(I_1, \ldots, I_s)$ helps us to construct algorithms to compute \emph{mixed multiplicities} (see Section \ref{SecMixedMul}), \emph{mixed volume} (see Section \ref{SecMixedVol}) and \emph{sectional Milnor numbers} (see Section \ref{SecMilNo}) in the general setting. Observe that to compute mixed multiplicities of ideals one can always assume each ideal to have positive grade using a standard trick (see Remark \ref{exp}). 

For any ideal $J$ in a Noetherian ring $R$, we denote the common length of the maximal $R$-sequences in $J$ by $\grade(J)$. Let $I_0,I_1,\ldots,I_r$ be a set of ideals in a Noetherian ring of dimension $d \geq 1$, which is either local or a standard graded algebra over a ﬁeld. Further assume that $I_0$ is primary to the maximal ideal (resp. maximal graded ideal) and $\grade (I_j)>0$ for all $j.$ Let $\underline{a}=(a_0,a_1,\ldots,a_r) \in \N^{r+1}$ with $|\underline{a}|=d-1.$ 
The function \texttt{MixedMultiplicity} computes the mixed multiplicity $e_{\underline{a}}(I_0 \mid I_1,\ldots,I_r).$ Using the results of N. V. Trung - J. K. Verma and B. Teissier, mixed volumes and sectional Milnor numbers can be identified with mixed multiplicities of ideals over polynomial rings. Let $Q_1,\ldots,Q_n$ be a collection of lattice polytopes in $\mathbb{R}^n.$ The function \texttt{mMixedVolume} computes the mixed volume of $Q_1,\ldots,Q_n.$ Let $R=k[x_1,\ldots,x_n]$ be a polynomial ring in $n$ variables, $\m$ be the maximal graded ideal and $f \in R$ be any polynomial. The function \texttt{secMilnorNumbers} computes the sectional Milnor numbers by calculating the mixed multiplicities $e(\m^{[n-i]}, J(f)^{[i]})$, $0 \leq i \leq n-1$, where $J(f) = (f_{x_1},\ldots,f_{x_n})$ is the ideal generated by the partial derivatives of $f.$ Many researchers, including M. Herrmann (\cite{HERRMANN}), J. K. Verma (\cite{VERMA1992}), and C. D'Cruz (\cite{D'Cruz2003}), have expressed the multiplicities of Rees algebras, extended Rees algebras, and certain form rings in terms of mixed multiplicities. Therefore \texttt{MixedMultiplicity} package is also helpful in this regard. For any unexplained invariants and definitions used in this article, the reader may refer to \cite{Bruns-Herzog}, \cite{Eisenbud-Huneke-Ulrich}, and \cite{Huneke-Swanson}.

\begin{acknowledgements}
 We thank the reviewers for their comments to improve this article. The authors thank Wolfram Decker and David Eisenbud for their help and encouragement. The first author thanks D. Grayson and M. Stillman for their comments and suggestions to improve the exposition and fix the grading scheme in the algorithm. The third author is grateful to the Infosys Foundation for providing partial financial support. At the beginning of the project, the first and third author were Ph.D. students at Indian Institute of Technology Bombay, Mumbai, India and were supported by UGC-SRF fellowship, Govt. of India.
\end{acknowledgements}

\section{Defining ideal of multi-Rees algebra of ideals}

An explicit formula for the defining ideal of the multi-Rees algebra of a finite collection of monomial ideals in a polynomial ring was given by D. Cox, K.-N. Lin, and G. Sosa in \cite{CLS}. In this section, we generalize their result to find the defining ideal of the multi-Rees algebra of a collection of positive grade ideals in a Noetherian ring. We use this result to write a Macaulay2 algorithm to compute the defining ideal when the base ring is a domain. We further provide a seperate algorithm for the non-domain case.

Let $R$ be a Noetherian ring and $I_1,\ldots, I_s \subseteq R$ be ideals. Suppose that $I_i = \langle f_{ij} \mid j=1,\ldots,n_i \rangle$ for all $i = 1,\dots,s$. Let $\R(I_1,\ldots,I_s)$ be the multi-Rees algebra of ideals $I_1,\ldots,I_s.$ Consider the set of indeterminates $\underline{Y} = \{Y_{ij} \mid i=1,\ldots,s, j=1,\ldots,n_i \}$ and $\underline{T}=(T_1,\ldots,T_s)$ over $R.$ Define an $R$-algebra homomorphism
$R[\underline{Y}] \overset{\varphi} \longrightarrow \R(I_1,\ldots,I_s) \subseteq R[\underline{T}]$ such that $\varphi(Y_{ij}) = f_{ij}T_i$, for all $i=1,\ldots,s$, $j=1,\ldots,n_i$ and $\varphi(r) = r$ for all $r \in R.$ Then $\mathcal{R}(I_1,\ldots,I_s) \simeq R[\underline{Y}] / \ker(\varphi).$ The ideal $\ker \varphi$ is called the defining ideal of $\R(I_1, \ldots, I_s)$. We give an explicit description of $\ker(\varphi).$ 

\begin{theorem} \label{TisRees}
Let $R$ be a Noetherian ring and $I_1,\ldots, I_s \subseteq R$ be ideals of positive grade. For each $i$, consider some generating set $\{f_{ij} \mid j=1, \ldots, n_i\}$ of $I_i$ which contains at least one nonzerodivisor $f_{ij_i}$. We set $h=\prod_{i=1}^s f_{ij_i}$ and set
\[ \Gamma = \langle Y_{ij} f_{ij'} - Y_{ij'} f_{ij} \mid i=1,\ldots,s \text{ and } j,j' \in \{1,\ldots,n_i\} \rangle : h^\infty \subseteq R[\underline{Y}]. \]	
Then $\Gamma \subseteq R[\underline{Y}]$ is the defining ideal of $\mathcal{R}(I_1,\ldots,I_s)$. 
\end{theorem}

\begin{proof}
	Without loss of generality, we may assume that $j_i=1$ for all $i=1,\ldots,s$ and $h=\prod_{i=1}^s f_{i1}$. Consider the ring homorphism $\phi: R \to R[f_{11}^{-1}, f_{21}^{-1}, \ldots, f_{s1}^{-1}] \cong R[h^{-1}]$. The defining ideal of $\mathcal{R}(\phi(I_1),\ldots,\phi(I_s))$ of the ideals $\phi(I_i)=(1, f_{i2}/f_{i1}, \ldots, f_{in_i}/f_{i1})$ for $i=1, \ldots, s$, is $J:=J_1+\cdots+J_s$ in $R[h^{-1}][\underline{Y}]\cong R[\underline{Y}]_h$, 
	where $J_i:=(Y_{i2}-\frac{f_{i2}}{f_{i1}}Y_{i1}, \ldots,  Y_{in_i}-\frac{f_{in_i}}{f_{i1}}Y_{i1})$.  We claim that $\phi^{-1}(J) = \Gamma.$ Observe that for all $j \neq j',$ and for all $i$,
	\[f_{ij}Y_{ij'}-f_{ij'}Y_{ij}=f_{ij}\left(Y_{ij'}-\frac{f_{ij'}}{f_{i1}}Y_{i1}\right)-f_{ij'}\left(Y_{ij}-\frac{f_{ij}}{f_{i1}}Y_{i1}\right) \in J_i. \] 
	So $\Gamma \subseteq \phi^{-1}(J)$. Now let $r \in \phi^{-1}(J)$. Then $\phi(r) \in J$, i.e.,
	\[\frac{r}{1} =\sum_{i=1}^s\sum_{j=2}^{n_i} \frac{a_{ij}}{f_{i1}^{m_{ij}}}\left(Y_{ij}-\frac{f_{ij}}{f_{i1}}Y_{i1}\right)\]
	for some $a_{ij} \in R$. Thus we have 
	\[h^{m} r \in (f_{i1}Y_{ij}-f_{ij}Y_{i1}\mid 1 \leq i \leq s \text{ and } 1 \leq j \leq n_i)\subseteq (f_{ij}Y_{ij'}-f_{ij'}Y_{ij} \mid 1 \leq i \leq s \text{ and } 1 \leq j, j' \leq n_i) \]
	for some $m \geq \max\{m_{ij}\mid 1 \leq i \leq s, 1 \leq j \leq n_i\}+1$. Therefore, $r \in \Gamma$ and hence the claim holds. From \cite[Proposition 3.11 $\mbox{(iii)}$]{Atiyah-MacDonald}, we get that $\phi^{-1}(J)$ is the defining ideal of $\R(I_1, \ldots,I_s)$, as $h^t$ is a nonzerodivisor on $R[\underline{Y}]/\Gamma$ for all $t \geq 1$.
\end{proof}

When $R$ is a domain or when a list of nonzerodivisors (one each from the list of ideals with positive grades) is provided by the user, the function \texttt{multiReesIdeal} computes the defining ideal of the multi-Rees algebra using Theorem \ref{TisRees}.  

%

\smallskip
\noindent
{\bf Algorithm (Part I).} Let $I_1,\ldots,I_s$ be ideals of a Noetherian ring $R$ with $\grade I_i>0$ for all $i$ and let $a_1, \ldots, a_s$ be a set of nonzerodivisors, where $a_i$ belongs to the generating set of $I_i$ for all $i.$ When $R$ is a domain, the function picks $a_i$ to be the first element in the generating set of $I_i$ for each $i$.\\
{\bf Input}: The list $W = \{\{I_1,\ldots,I_s\},\{a_1, \ldots, a_s\}\}$, or $W = \{I_1,\ldots,I_s\}$ if $R$ is a domain.
\begin{enumerate}[1.]
\item Define a polynomial ring $S$ by attaching $m$ indeterminates to the ring $R$, where $m$ is the sum of the number of generators of all the ideals. 

\item For each ideal $I_i$, construct a matrix $M(i)$ whose first row consists of the generators of the ideal and the second row consists of the indeterminates. 

\item Add the $2 \times 2$ minors of these matrices to get an ideal $L.$ 

\item To get the defining ideal, saturate $L$ with the product of $a_i$'s. 
\end{enumerate} 
{\bf Output}: The defining ideal of the Rees algebra $\R(I_1,\ldots,I_s).$ 

\smallskip
The elements of the defining ideal are assigned $\mathbb{N}^{s+1}$ degree by the function, where the first $\mathbb{N}^{s}$ coordinates point to the component of $\mathcal{R}(I_1,\ldots,I_s)$ where the element lies and the last coordinate is the degree of the element. In order to compute the multi-Rees ideal $\R(I,J)$ using the function \texttt{reesIdeal}, one needs to enter $I \oplus J$  and in general, are slower than the \texttt{multiReesIdeal} routine. 

\begin{example}
{\small
\begin{multicols}{2}
\begin{verbatim}
i1 : R = QQ[x,y,z];
i2 : I = ideal(x^4+y^2*z^2,x*y^2*z);
i3 : J = ideal(y^3+z^3, x^2*y+x*z^2);
i4 : time multiReesIdeal {I,J};
      -- used 0.0131127 seconds
i5 : transpose gens oo
o5 = {0,-1,-6} | (x2y+xz2)X_2+(-y3-z3)X_3 |
     {-1,0,-8} | xy2zX_0+(-x4-y2z2)X_1    |
     
i6 : (first entries gens o4)/degree 
o6 : {{0, 1, 6}, {1, 0, 8}}  
i7 : time multiReesIdeal({I,J}, {I_1, J_0});
     -- used 0.0629737 seconds  
i8 : time reesIdeal directSum(module I, module J);
     -- used 0.40043 seconds
\end{verbatim}
\end{multicols}
}
\end{example} 
In the following example, our algorithm works faster than the function \texttt{reesIdeal}.
\begin{example}
{\small
\begin{multicols}{2}	
\begin{verbatim}
i1 : ZZ/32003[y_0..y_4];
i2 : C = trim monomialCurveIdeal(R, {3, 5, 7, 12});
i3 : time multiReesIdeal (C, C_0);
          -- used 167.118 seconds
          i4 : time reesIdeal (C, C_0);
          -- used 295.675 seconds
\end{verbatim}
\end{multicols}
}
\end{example}


\subsection{Routine for the non-domain case} In this section we present an algorithm to find the defining ideal of the Rees algebra using the definitions of Rees algebra.
This method does not have any requirements on the grade of the ideals or the domain-ness of the ring, but it seems to be comparably slower than the previous method. 

We can construct the Rees algebra of $I_i$ as the kernel of the map $\varphi_i:R[Y_{i1},\dots,Y_{in_i}]\rightarrow R[T_i]$ where $\varphi_i(Y_{ij})=f_{ij}T_i$ for $j=1, \ldots, n_i$. Notice that the $(\ker\varphi_i )R[\underline{Y}]\subseteq \ker \varphi$. Suppose that $\phi_i$ is the presentation matrix of $I_i$. Then the symmetric algebra $\sym(I_i)$ has a presentation $R[Y_{i1},\dots,Y_{in_i}]/\mathcal{L}_i$ where $\mathcal{L}_i=I_1([Y_{i1},\dots,Y_{in_i}]\cdot \varphi_i)$. Clearly, $\mathcal{L}_i \subseteq \ker \varphi_i\subset \ker\varphi$. So the map $\varphi_i$ factors through the symmetric algebra $\sym(I_i)$. Now $\sym(I_1)\otimes\cdots\otimes \sym(I_s)$ has the presentation $R[\underline{Y}]/(\mathcal{L}_1+\cdots+\mathcal{L}_s)$. Since each of $\mathcal{L}_i\subseteq\ker \varphi$, the map $\varphi$ also factors through $\sym(I_1)\otimes\cdots\otimes \sym(I_s)$. Thus to find the defining ideal of the multi Rees algebra $\mathcal{R}(I_1,\dots,I_s)$ it is enough to find the kernel of the surjective map $\sym(I_1)\otimes\cdots\otimes \sym(I_s)\rightarrow \mathcal{R}(I_1,\dots,I_s)$.

\vspace{0.15cm}
\noindent
{\bf Algorithm (Part II).} Let $I_1,\ldots,I_s$ be ideals in the Noertherian ring $R$. \\
{\bf Input}: The list $W = \{I_1,\ldots,I_s\}.$
\begin{enumerate}[1.]
\item For each ideal $I_i$ compute the presentation $F_i'\xrightarrow{\phi_i}F_i \rightarrow R_i\rightarrow 0$ where the entries of $\varphi_i$ are the generating set for $I_i$. 
\item Now compute the source symmetric algebra  $\sym(F_1')\otimes\cdots\otimes\sym(F_s')$ and the target symmetric algebra $\sym(F_1)\otimes\cdots\otimes\sym(F_s)$ of the map $\phi_1 \otimes \cdots \otimes \phi_s$.
\item Compute the map between the symmetric algebra of the source and target and return kernel of the above map.
\end{enumerate} 
{\bf Output}: The defining ideal of the Rees algebra $\R(I_1,\ldots,I_s).$ 

In the following example the ring $U$ is not a domain and hence the algorithm uses the above method. As expected, the computational time in the case a nonzero divisor is given as an optional input is faster than the case where no optional input is given.
\begin{example}
	{\small 
	\begin{multicols}{2}
	\begin{verbatim}
	i1 : T = QQ[a,b,c];
	i2 : m = matrix{{a,b,c},{b,c,a}};
	i3 : U = T/minors(2,m);
	i4 : J = ideal vars U;
	i5 : time multiReesIdeal J;
	-- used 0.0977545 seconds
	i6 : time multiReesIdeal (J, a);
	-- used 0.0142101 seconds
	\end{verbatim}
	\end{multicols}
	}  
\end{example}

\section{Computation of mixed multiplicities of ideals}\label{SecMixedMul}

Let $I_1,\ldots, I_r$ be ideals of positive height in a local ring $(A,\m)$ {\rm(}or a standard graded algebra over a field and $\m$ is the maximal graded ideal{\rm)} and let $I_0$ be an $\m$-primary ideal. In \cite{Verma-katz}, the authors prove that $\ell\left(I_0^{u_0}I_1^{u_1}\cdots I_r^{u_r}/I_0^{u_0+1}I_1^{u_1}\cdots I_r^{u_r}\right)$ is a polynomial $P(\underline{u})$, for $u_i$ large, where $\underline{u}=(u_0, \ldots, u_r)$. Write this polynomial in the form 
\[P(\underline{u})=\sum_{\substack{\alpha \in \N^{r+1}\\ |\alpha|=t}} \frac{1}{\alpha!}e_{\alpha}(I_0 \mid I_1,\ldots,I_r) u^{\alpha}+\text{ lower degree terms, }\] 
where $t = \deg P(\underline{u}), \alpha=(\alpha_0, \ldots, \alpha_r) \in \N^{r+1}, \alpha! = \prod_{i=0}^{r} \alpha_i!$ and $|\alpha| = \sum_{i=0}^{r} \alpha_i.$ If $|\alpha|=t$, then $e_{\alpha}(I_0 \mid I_1,\ldots,I_r)$ are called the {\it mixed multiplicities} of the ideals $I_0, I_1,\ldots,I_r$. In \cite{Trung-Verma}, the authors prove the following result.

\begin{theorem}[{\rm{\cite[Theorem 1.2]{Trung-Verma}}}]
	Set $\displaystyle R = R(I_0\mid I_1, \ldots, I_r) = \bigoplus_{(u_0,u_1, \ldots, u_r) \in \N^{r+1}} \frac{I_0^{u_0}I_1^{u_1}\cdots I_r^{u_r}}{I_0^{u_0+1}I_1^{u_1}\cdots I_r^{u_r}}.$ Assume that $d = \dim A/(0 :I^{\infty})\geq 1$, where $I=I_1\cdots I_r$. Then $\deg P_R(\underline{u})=d-1$, where $P_R(\underline{u})$ is the Hilbert polynomial of $R$.
\end{theorem}

In \cite{Verma-Katz-Mondal}, D. Katz, S. Mandal, and J. K. Verma, gave a precise formula for the Hilbert polynomial of the quotient of a bi-graded algebra over an Artinian local ring. This result can be generalized to  the case of quotient of a multi-graded algebra over an Artinian local ring and we skip the proof as the technique is similar. Let $S$ be an Artinian local ring and $A = S[X_1,\ldots,X_r]$ be an $\N^r$-graded ring over $S,$ where for $1 \leq i \leq r$, $X_i=\{X_i(0), \ldots, X_i(s_i)\}$ is a set of indeterminates. Set $\underline{u}=(u_1, \ldots, u_r) \in \N^r$ and $|u|=u_1+\cdots+u_r$. Then $A = \bigoplus_{\underline{u}\in \N^r} A_{\underline{u}}$, where $A_{\underline{u}}$ is the $S$-module generated by monomials of the form $P_1\cdots P_r$, where $P_i$ is a monomial of degree $u_i$ in $X_i$. An element in $A_{\underline{u}}$ is called multi-homogeneous of degree $\underline{u}$. An ideal $I \subseteq A$ generated by multi-homogeneous elements is called a multi-homogeneous ideal. Then $R=A/I$ is an $\N^r$-graded algebra with $\underline{u}$-graded component $R_{\underline{u}}=A_{\underline{u}}/I_{\underline{u}}$. The Hilbert function of $R$ is defined as $H(\underline{u})=\lambda(R_{\underline{u}}),$ where $\lambda$ denotes the length as an $S$-module. Set $\underline{t}^{\underline{u}}=t_1^{u_1}\cdots t_r^{u_r}$. The Hilbert series of $R$ is given by $HS(R,\underline{t})=\sum_{\underline{u}\in \N^r} \lambda(R_{\underline{u}})\underline{t}^{\underline{u}}$. Then there exists a polynomial $N(t_1, \ldots, t_r) \in \Z[t_1, \ldots, t_r]$ so that $HS(R,\underline{t})=N(t_1,\ldots, t_r)/\left((1-t_1)^{s_1+1} \cdots (1-t_r)^{s_r+1}\right)$. 

\begin{theorem}\label{thm1}
	Write the Hilbert polynomial of $R$ as
	\begin{equation}\label{eq11}
		P(\underline{u}, R) = \sum_{\alpha=\underline{0}}^{\underline{s}} c_{\alpha}\binom{u_1+\alpha_1}{\alpha_1} \cdots \binom{u_r+\alpha_r}{\alpha_r}.
	\end{equation}
	Then
	\[c_{\alpha}=\frac{(-1)^{|\underline{s}-\alpha|}}{(s_1-\alpha_1)! \cdots (s_r-\alpha_r)!} \cdot \frac{\partial^{|\underline{s}-\alpha|}N}{\partial t_1^{s_1-\alpha_1} \cdots \partial t_r^{s_r-\alpha_r}}\scalebox{2.5}{\ensuremath \mid}_{(t_1, \ldots, t_r)=\underline{1}}.\]
\end{theorem}

Note that 
\[\binom{u_i+\alpha_i}{\alpha_i}= \frac{1}{\alpha_i!}u_i^{\alpha_i}+\text{ lower degree terms}.\] 
So if we write $P(\underline{u})$ as in \eqref{eq11}, then $c_{\alpha}=e_{\alpha}$ for all $\alpha \in \N^{r+1}$ with $|\alpha|=d-1$.
Therefore, Theorem \ref{thm1} gives an expression for $e_{\alpha}.$

\begin{remark}\label{exp}
	Let $I'_0, I'_1,\ldots,I'_r$ denote the images of ideals $I_0, I_1,\ldots,I_r$ in the
	ring $A/(0 : I^\infty),$ where $I = I_1 \cdots I_r.$ Put $R' = R(I'_0 |I'_1,\ldots, I'_r).$ Then for $\underline{u}$ large, $P_R(\underline{u}) = P_{R'}(\underline{u})$ (see \cite[Theorem 1.2]{Trung-Verma} for details). Therefore, in case $\grade I_i=0$ for some $i$, the user needs to work in the quotient ring $A/(0:I^\infty)$ and input the images of the ideals in the quotient ring.
\end{remark}

\noindent
{\bf Algorithm.} The algorithm for the function \texttt{MixedMultiplicity} uses the above ideas to calculate the mixed multiplicity. Let $I_0,I_1,\ldots,I_r$ be a set of ideals of a Noetherian ring $R$ of dimension $d \geq 1$, where $I_0$ is primary to the maximal ideal and $\grade(I_i)>0$ for all $i$; $\underline{a}=(a_0,a_1,\ldots,a_r) \in \N^{r+1}$ with $|\underline{a}|=d-1.$ \\
{\bf Input}: The sequence $W=((I_0,I_1,\ldots,I_r),(a_0,a_1,\ldots,a_r))$.
\begin{enumerate}[1.]
	\item Compute the defining ideal of the multi-Rees algebra using the function \texttt{multiReesIdeal} and use it to find the Hilbert series of $R(I_0 \mid I_1,\ldots,I_r).$
	
	\item Extract the powers of $(1-T_i)$ in the denominator of the Hilbert series.
	
	\item Calculate $e_{\underline{a}}$ using the formula given in Theorem \ref{thm1}.
\end{enumerate}
{\bf Output}: The mixed multiplicity $e_{\underline{a}}(I_0 \mid I_1,\ldots,I_r).$

\begin{example}
	{\small
		\begin{verbatim}
			i1 : R = QQ[x,y,z,w];
			i2 : I = ideal(x*y*w^3,x^2*y*w^2,x*y^3*w,x*y*z^3);
			i3 : m = ideal vars R;
			i4 : mixedMultiplicity ((m,I,I,I),(0,1,1,1))
			o4 = 6
		\end{verbatim}
	}
\end{example}

In the following example we 
make a session in Macaulay2 to find the mixed multiplicity in the situation when some ideal has grade zero. In the following example, we use the fact that $(0: I^\infty)=(0: (I^t)^\infty)$ for each $t \geq 1$.
\begin{example}
	Let $S=\mathbb{Q}[x,y,z,w]/(xz,yz)$, $\m=(x,y,z,w)$, and $I=(x,y).$ Notice that $\grade I=0$, since $I \in \Ass S$.
	
	{ \small
		\begin{multicols}{2}
			\begin{verbatim}
				i1 : S = QQ[x,y,z,w]/ideal(x*z, y*z);
				i2 : I = ideal(x,y);
				i3 : m = ideal vars S;
				i4 : L = saturate(sub(ideal 0, S), I);
				i5 : T = S/L;
				i6 : J = substitute(I, T);n = substitute(m, T); 
				i8 : dim T
				o8 = 3
				i9 : mixedMultiplicity ((n,J,J,J),(1,0,1,0))
				o9 = 1
			\end{verbatim}
		\end{multicols}
	}	

\end{example}

To calculate mixed multiplicity, the function \texttt{MixedMultiplicity} computes the Hilbert polynomial of the graded ring $\bigoplus I_0^{u_0}I_1^{u_1} \cdots I_r^{u_r}/I_0^{u_0+1}I_1^{u_1} \cdots I_r^{u_r}$ . In particular, if $I_1,\ldots,I_r$ are also $\m$-primary ideals, then $e_{(a_0,a_1,\ldots,a_r)}(I_0 \mid I_1,\ldots,I_r) = e(I_0^{[a_0+1]}, I_1^{[a_1]}, \ldots, I_r^{[a_r]})$ (see \cite[Definition 17.4.3]{Huneke-Swanson} for the definition of $e(I_0^{[a_0+1]}, I_1^{[a_1]}, \ldots, I_r^{[a_r]})$). Therefore, to compute the $(a_0+1, a_1,\ldots, a_r)$-th
mixed multiplicity of $I_0,I_1,\ldots,I_r$, one needs to enter the sequence $(a_0,a_1,\ldots,a_r)$ in the function. The same is illustrated in the following example.
\begin{example}
	{\small
		\begin{multicols}{2}
			\begin{verbatim}
				i1 : R = QQ[x,y,z];
				i2 : m = ideal vars R;
				i3 : f = z^5 + x*y^7 + x^15;
				i4 : I = ideal(apply(0..2, i -> diff(R_i,f)));
				    i5 : mixedMultiplicity ((m,I),(2,0))
				    o5 = 1
				    i6 : mixedMultiplicity ((m,I),(1,1))
				    o6 = 4
			\end{verbatim}
		\end{multicols}
	}
\end{example}

\section{Mixed volume of lattice polytopes}\label{SecMixedVol}

The Minkowski sum of two polytopes $P$ and $Q$ in $\mathbb{R}^n$ is defined as the polytope $P + Q = \{a + b \mid a \in P, b \in Q \}.$ The $n$-dimensional mixed volume of a collection of $n$ polytopes $Q_1,\ldots,Q_n$ in $\mathbb{R}^n$, denoted by $MV_n(Q_1,\ldots, Q_n)$, is the coefficient of $\lambda_1 \cdots \lambda_n$ in $\vol_n(\lambda_1Q_1 + \cdots + \lambda_n Q_n).$ Given a collection of lattice polytopes in $\mathbb{R}^n$, Trung and Verma  proved that their mixed volume is equal to a mixed multiplicity of a set of homogeneous ideals. 
\begin{corollary}[{\rm {\cite[Corollary 2.5]{Trung-Verma}}}] \label{cor1-JKV}
Let $Q_1,\ldots,Q_n$ be an arbitrary collection of lattice polytopes in $\mathbb{R}^n$. Let $R = k[x_0, x_1,\ldots, x_n]$ and $\m$ be the maximal graded ideal of $R$. Let $M_i$ be any set of monomials of the same degree in $R$ such that $Q_i$ is the convex hull of the lattice points of their dehomogenized monomials in $k[x_1,\ldots,x_n]$. Let $I_j$ be the ideal of $R$ generated by the monomials of $M_j$. Then 
$ MV_n(Q_1,\ldots,Q_n) = e_{(0,1,...,1)}(\m \mid I_1,\ldots, I_n).$
\end{corollary}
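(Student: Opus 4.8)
The plan is to identify both sides of the asserted equality as the coefficient of the squarefree monomial $u_1 \cdots u_n$ in one and the same degree-$n$ polynomial, namely a multigraded Hilbert polynomial of the product ideals $I_1^{u_1} \cdots I_n^{u_n}$. First I would unwind the definition of $e_{(0,1,\ldots,1)}(\m \mid I_1, \ldots, I_n)$: since $\dim R = n+1$, the top-degree part of the Bhattacharya polynomial has degree $n$, and because $\alpha! = 1$ for $\alpha = (0,1,\ldots,1)$, the number $e_{(0,1,\ldots,1)}$ is exactly the coefficient of $u_1 \cdots u_n$ in the polynomial $P(u_0, u_1, \ldots, u_n)$ that agrees, for $u_i \gg 0$, with
\[
B(u_0, u_1,\ldots,u_n) = \ell\!\left( \frac{\m^{u_0} N}{\m^{u_0+1} N} \right), \qquad N = I_1^{u_1} \cdots I_n^{u_n}.
\]

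Second, I would exploit that all the ideals are monomial and $R$ is standard graded. Because $\m$ is the irrelevant ideal generated by all the variables, an elementary check shows that $\m^{u_0} N / \m^{u_0+1} N$ is concentrated in the single degree $D + u_0$, where $D = \sum_j u_j d_j$ is the generating degree of $N$; hence $B(u_0, u_1,\ldots,u_n) = \dim_k N_{D + u_0}$. For each fixed large $(u_1,\ldots,u_n)$ this is the Hilbert function of $N$ evaluated at $D + u_0$, so it is a polynomial in $u_0$, and comparing it with $P$ on the common large-$u_0$ range forces $P(0, u_1,\ldots,u_n) = \dim_k N_D$ for $(u_1,\ldots,u_n) \gg 0$. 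Since setting $u_0 = 0$ retains precisely the terms of $P$ of $u_0$-degree zero, the number we want is the coefficient of $u_1 \cdots u_n$ in the polynomial counting the minimal-degree component $\dim_k [I_1^{u_1} \cdots I_n^{u_n}]_{\sum_j u_j d_j}$.

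Third, I would translate this count into lattice-point geometry. A monomial of degree $D$ in $R = k[x_0, \ldots, x_n]$ is determined by the exponent $a \in \Z^n$ of its dehomogenization, with the $x_0$-exponent equal to $D - |a| \ge 0$, and the assertion is that the monomials of the ideal are counted by the lattice points of the Minkowski sum $u_1 Q_1 + \cdots + u_n Q_n$. By Ehrhart's theorem this number is, for $(u_1,\ldots,u_n) \gg 0$, a polynomial whose degree-$n$ part equals $\vol_n(u_1 Q_1 + \cdots + u_n Q_n)$; Minkowski's theorem then identifies the coefficient of $u_1 \cdots u_n$ in this volume with $MV_n(Q_1, \ldots, Q_n)$ by the very definition of mixed volume, while the lower-order Ehrhart terms cannot affect a top-degree coefficient. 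Matching the two coefficient computations yields the identity.

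The hard part is exactly the claim in the third step. The monomials genuinely lying in $I_1^{u_1} \cdots I_n^{u_n}$ in degree $D$ are its minimal generators, whose dehomogenized exponents form the \emph{sumset} $u_1 G_1 + \cdots + u_n G_n$ of the generating sets $G_j$, and this sumset is in general strictly smaller than the full set of lattice points of the convex Minkowski sum $\sum_j u_j Q_j$. The reconciliation is to invoke that mixed multiplicities depend only on the integral closures of the ideals, together with the fact that for a monomial ideal the integral closure is the ideal spanned by the lattice points of its Newton polyhedron, whose minimal-degree slice is exactly $Q_j$ and which satisfies $\operatorname{Newt}(IJ) = \operatorname{Newt}(I) + \operatorname{Newt}(J)$. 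Passing to the integrally closed family replaces the sumset by the entire lattice-point set of $\sum_j u_j Q_j$ without altering any mixed multiplicity, after which the Ehrhart--Minkowski computation applies verbatim. Making this passage rigorous, that is, controlling the normalized multigraded Hilbert function and confirming that its top-degree coefficient is unaffected, is the technical heart of the argument.
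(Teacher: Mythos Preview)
The paper does not prove this corollary at all; it is quoted verbatim from \cite[Corollary~2.5]{Trung-Verma} and used as a black box for the mixed-volume algorithm. So there is no in-paper argument to compare against, and your sketch is in fact a reconstruction of the original Trung--Verma proof: reduce $e_{(0,1,\ldots,1)}$ to the top-degree coefficient of a multigraded Hilbert function, interpret that function as a lattice-point count in a Minkowski sum, and read off the mixed volume via Ehrhart--Minkowski. That is exactly the right architecture, and you correctly isolate the genuine difficulty, namely that the generators of $I_1^{u_1}\cdots I_n^{u_n}$ in degree $D$ give only the \emph{sumset} $u_1G_1+\cdots+u_nG_n$, not all lattice points of $u_1Q_1+\cdots+u_nQ_n$, and that one repairs this by passing to integral closures (equivalently, to the normalization of the associated toric ring) without changing any mixed multiplicity.

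One imprecision is worth flagging. In your second step you assert that agreement of $P(u_0,u_1,\ldots,u_n)$ and $\dim_k N_{D+u_0}$ for $u_0\gg 0$ forces $P(0,u_1,\ldots,u_n)=\dim_k N_D$. What the polynomial comparison actually gives is $P(0,u_1,\ldots,u_n)=H_N(D)$, the Hilbert \emph{polynomial} of $N$ evaluated at its generating degree, and in general $H_N(D)\neq\dim_k N_D$. This does not damage the conclusion, because you only need the coefficient of $u_1\cdots u_n$, which is a top-degree term and is insensitive to such lower-order discrepancies; but the sentence as written is not literally correct and should be rephrased to track the Hilbert polynomial rather than the Hilbert function when you specialize $u_0=0$. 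With that adjustment and the integral-closure passage you already outline, the argument goes through.
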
	

We use this result to construct an algorithm which calculates the mixed volume of a collection of lattice polytopes. We also give an algorithm which outputs the homogeneous ideal corresponding to the vertices of a lattice polytope. 

Let $Q$ be a lattice polytope in $\mathbb{R}^n$ with the set of vertices $\{p_1, \ldots, p_r\} \subseteq \N^n$. We first compute the corresponding homogeneous ideal $I$ in the ring $R = k[x_1,\ldots, x_{n+1}].$ We write a function \texttt{homIdealPolytope} which requires as an input the list $W = \{p_1,p_2,\ldots, p_r\}$ and produces as an output the homogeneous ideal corresponding to the lattice points of $Q.$

We write a function \texttt{mMixedVolume} to calculate the mixed volume of a collection of $n$ lattice polytopes in $\mathbb{R}^n.$ Let $Q_1,\ldots,Q_n$ be an arbitrary collection of lattice polytopes in $\mathbb{R}^n$. Let $R=k[x_1,\ldots,x_{n+1}]$ and let $I_i$ be the homogeneous ideal of $R$ such that the polytope $Q_i$ is the convex hull of the lattice points of the dehomogenization of a set of monomials that generates $I_i$ in $k[x_1,\ldots,x_n]$, for all $i$. Each of these homogeneous ideals can be obtained by giving the lattice points of each polytope as an input in the function \texttt{homIdealPolytope}. The function \texttt{mMixedVolume} takes the list $\{I_1,\ldots, I_n\}$ as an input and produces the mixed volume of $Q_1, \ldots, Q_n$ as an output. The function can also take the list of lists of vertices of the polytope as an input to compute their mixed volume. Since calculating the mixed volume is same as calculating a mixed multiplicity, the algorithm of the function \texttt{mMixedVolume} is similar to the algorithm of the function \texttt{MixedMultiplicity}.

\begin{example} \label{ex3}
	We calculate the mixed volume of a {\it cross polytope}. 
	An $n$-cross polytope ($\beta_n$) is the convex hull of the points formed by permuting the coordinates $(\pm 1, 0, \ldots,0) \in \mathbb{R}^n.$
	\begin{align*}
	\beta_n 
	&= \{ (x_1,\ldots,x_n) \in \mathbb{R}^n \mid |x_1|+\cdots +|x_n| \leq 1 \} \\
	&= \text{conv }\{(\pm 1,0,\ldots,0), (0,\pm 1,0,\ldots,0), \ldots, (0,0,\ldots,0,\pm 1) \}.
	\end{align*}  
The volume of an $n$-cross polytope is $2^n/n!$ (\cite[Theorem 2.1]{crosspolytope}) and hence the mixed volume is $2^n.$ We say that a polytope is a \emph{$(0,1)$-polytope} if its each vertex coordinates are $0$ or $1$, that is, whose vertex set is a subset of $\{0,1\}^d$ of the unit cube. In this example, we calculate mixed volume of a $2$-cross polytope and a $2$-dimensional $(0,1)$-polytope.
	{ \small
	\begin{multicols}{2}
	\begin{verbatim}
		i1 : A = {(0,1),(1,0),(0,-1),(-1,0)};
		i2 : mMixedVolume {A,A}
		o2 = 4
		i3 : I = homIdealPolytope A;
		i4 : B = {(0,0),(0,1),(1,0),(1,1)};
		i5 : J = homIdealPolytope B;
		i6 : mMixedVolume {I, sub(J, vars ring I)}
		o6 = 4
		\end{verbatim}
	\end{multicols}
}
\end{example}

The proposed function \texttt{mMixedVolume} takes less time to compute the mixed volume of a $3$-cross polytope than the existing function \texttt{mixedVolume} in the \texttt{Polyhedra} package.
{ \small
\begin{multicols}{2}
\begin{verbatim}
i1 : needsPackage "Polyhedra"; 
i2 : Q = crossPolytope 3;
i3 : time mixedVolume {Q,Q,Q};
     -- used 238.277 seconds
------- n-cross polytope 
i4 : CP = n -> ( 
          U = (i,p) -> (1..n)/(j -> if 
          j == i then p else 0);           
        flatten toList apply(1..n, i -> 
        toList(U(i,1), U(i,-1))) 
        );	
i5 : time mMixedVolume {CP(3), CP(3), CP(3)}
     -- used 3.71303 seconds
o5 = 8
		\end{verbatim}
	\end{multicols}
}

\section{Sectional Milnor numbers}\label{SecMilNo}

In this section, we give an algorithm to compute the sectional Milnor numbers. We use Teissier's observation of identifying the sectional Milnor numbers with mixed multiplicities to achieve this task. In \cite{teissier1973}, Teissier conjectured that invariance of the Milnor number implies invariance of the sectional Milnor numbers. The conjecture was disproved by Jo\"el Brian\c{c}on and Jean-Paul Speder. We verify their example using our algorithm.

Suppose that the origin is an isolated singular point of a complex analytic hypersurface $H=V(f)\subset \mathbb{C}^{n+1}.$ Let $f_{z_i}$ denote the partial derivative of $f$ with respect to $z_i.$ Set
\[\mu=\dim_{\mathbb{C}}\frac{\mathbb{C}\{z_0, z_1, \dots, z_n\}}{(f_{z_0}, f_{z_1},\dots, f_{z_n})}.\]
The number $\mu$ is called the {\it Milnor number} of the hypersurface $H$ at the origin. Teissier, in his Carg\`ese paper \cite{teissier1973}, refined the notion of Milnor number by replacing it with a sequence of Milnor numbers of intersections with general linear subspaces. Let $(X, x)$ be a germ of a hypersurface in  $\mathbb{C}^{n+1}$ with an isolated singularity. The Milnor number of $X\cap E$, where $E$ is a general linear subspace of dimension $i$ passing through $x$, is called the {\it $i^{th}$-sectional Milnor number} of $X.$ It is denoted by $\mu^{(i)}(X, x).$

Let $R=\mathbb{C}[x_1,\ldots,x_n]$ be a polynomial ring in $n$ variables, $\mathfrak{m}$ be the maximal graded ideal and $f \in R$ be any polynomial. Let $J(f) = (f_{x_1},\ldots,f_{x_n})$ be the Jacobian ideal. In 1973, Teissier proved that the $i^{th}$-mixed multiplicity, $e(\m^{[n-i]}, J(f)^{[i]})$, is equal to the $i^{th}$-sectional Milnor number of the singularity. 

Using Theorem \ref{thm1}, one can now calculate the first $n-1$ mixed multiplicities of $\m$ and $J(f)$. We use the ideas in the previous section to write a function \texttt{secMilnorNumbers} for computing the sectional Milnor numbers. With a polynomial $f$ given as an input, the algorithm calculates the Jacobian ideal of $f$ and then using the function \texttt{multiReesIdeal}, it finds the defining ideal of $\R(\m, J(f))$. This helps to find the Hilbert series of the special fiber $\f(\m,J(f))= \R(\m, J(f)) \otimes_R R/\mathfrak{m}$. Using the formula given in Theorem \ref{thm1}, it then calculates the mixed multiplicities.

\begin{example}
{\small
\begin{multicols}{2}
	\begin{verbatim}
		i1 : R = QQ[x,y,z];
		i2 : f = x^2*y+y^2*z+z^3;
		i3 : secMilnorNumbers(f)
		
		o3 = HashTable{0 => 1}
		               1 => 2
		               2 => 4
		o3 : HashTable
	\end{verbatim}
	\end{multicols}
}
\end{example}


Jo{\"e}l Brian\c{c}on and Jean-Paul Speder  (in \cite{Briancon-Speder}) considered the family of hypersurfaces ${\bf X}_{t} \in \C^3$ defined by 
$F_t(x,y,z)=z^5 + ty^6z + xy^7 + x^{15}=0$. They proved that the topological type of ${\bf X}_t$ is constant whereas the topological type of the section of ${\bf X}_t$ by a general plane varies. One can verify the example using the methods discussed above. For instance, consider the ideals $\m=(x,y,z)$ and $J(F_t)=(\partial F_t/\partial x, \partial F_t/\partial y, \partial F_t/\partial z)$ in the ring $\mathbb{C}[x,y,z]$.
In the \href{https://arxiv.org/pdf/1902.07384.pdf}{arXiv} version of this article, we show that while $e(J(F_t))$ is independent of $t$ but  $e(\m^{[1]}, J(F_t)^{[2]})$ depends on $t$. 
The following Macaulay2 session demostrates the example given by Brian\c{c}on and Speder. 
{\small
\begin{multicols}{2}
\begin{verbatim}
i1 : QQ[t];
i2 : k = frac oo;
i3 : R = k[x,y,z];
i4 : f = z^5 + t*y^6*z + x*y^7 + x^15;
i5 : secMilnorNumbers (f)
o5 = HashTable{0 => 1 }
               1 => 4
               2 => 26
i6 : g = z^5 + x*y^7 + x^15;
i7 : secMilnorNumbers (g)
o7 = HashTable{0 => 1 }
               1 => 4
               2 => 28
\end{verbatim}
\end{multicols}
}

\bibliographystyle{plain}
\bibliography{Goel-Mukundan-Roy-Verma-2021}
\end{document}